\numberwithin{equation}{section}
\theoremstyle{plain}
\newtheorem{theo}{Theorem}[section]%[Definizioni]
\newtheorem{lem}[theo]{Lemma}%[Definizioni]
\newtheorem{prop}[theo]{Proposition}%[Definizioni]
\newtheorem{definition}[theo]{Definition}%[Definizioni]
\theoremstyle{remark}
\newtheorem{rem}[theo]{Remark}%[Definizioni]
\newcommand{\C}{\mathbb{C}}
\newcommand{\R}{\mathbb{R}}
\newcommand{\M}{\mathbb{M}}
\newcommand{\divrg}{\textrm{div}\,}
\begin{document}

\title{Global stability for an inverse problem in soil-structure
interaction
\thanks{The first and the third authors are supported by
FRA2014 `Problemi inversi per PDE, unicit\`a, stabilit\`a, algoritmi', Universit\`a degli Studi di Trieste,
the second author has been partially supported
by the Carlos III University of Madrid-Banco de Santander Chairs of Excellence Programme
for the 2013-2014 Academic Year, the third and the fourth author are partially supported by
GNAMPA of Istituto Nazionale di Alta Matematica.}
}
\author{Giovanni Alessandrini\thanks{
Universit\`{a} degli Studi di Trieste, Italy
(alessang@units.it)}\and Antonino Morassi\thanks{ Universit\`{a}
degli Studi di Udine, Italy (antonino.morassi@uniud.it)}\and Edi
Rosset\thanks{ Universit\`{a} degli Studi di Trieste, Italy
(rossedi@units.it)}\and Sergio Vessella\thanks{ Universit\`{a}
degli Studi di Firenze, Italy (sergio.vessella@unifi.it)}}
%    General info
\date{}
\maketitle
%%%%%%%%%%%%%%%%%%%%%%%%%%%%%%%%%
\begin{abstract}

We consider the inverse problem of determining the Winkler
subgrade reaction coefficient of a slab foundation modelled as a
thin elastic plate clamped at the boundary. The plate is loaded by
a concentrated force and its transversal deflection is measured at
the interior points. We prove a global H\"{o}lder stability
estimate under (mild) regularity assumptions on
the unknown coefficient.

\end{abstract}

\noindent
\textbf{MSC: }{35R30, 35J55, 35R05.}

\noindent \textbf{Keywords: }{Inverse problem,
Winkler soil-foundation interaction, elastic plate, H\"{o}lder
stability.}

\section{Introduction\label{Intro}}

The soil-structure interaction is an important issue in structural
building design. The determination of the contact actions
exchanged between foundation and soil is commonly approached by
using simplified models of interaction. Among these, the model
introduced by Winkler in the second half of the nineteenth century is
one of the most popular in engineering applications \cite{Win}. In
Winkler's model, the foundation rests on a bed of linearly elastic
springs of stiffness $k$, $k \geq 0$, acting along the vertical
direction only. The springs are independent of each other, that
is, the deflection of every spring is not influenced by the other
adjacent springs. The accuracy of this model of interaction
depends strongly on the values assigned to the subgrade reaction
coefficient $k$. Ranges of average values of $k$ are available in
literature {}from extensive series of in-situ experiments
performed on various soil types, \cite{CG}, but these values are
quite scattered and, in addition, they may vary significantly
{}from point to point in the case of large foundations. Estimate
of the coefficient $k$ becomes even more difficult for existing
buildings, since the soil on which the foundation is resting is
not directly accessible for experiments. For the reasons stated
above, the development of a method for the determination of $k$ is
an inverse problem of current interest in practice.

In this paper we consider the stability issue in determining the
Winkler's subgrade coefficient of a slab foundation {}from the
measurement of the deflection induced at interior points by a
given load condition. The mechanical model is as follows. The slab
foundation is described as a thin elastic plate with uniform
thickness $h$ and middle surface coinciding with a bounded
Lipschitz domain $\Omega \subset \R^2$. The plate is assumed to be clamped at the boundary
$\partial \Omega$, a condition that occurs when the slab
foundation is anchored to sufficiently rigid vertical walls. A
concentrated force of intensity $f$ is supposed to act at an
internal point $P_0 \in \Omega$. This load condition has the merit
of being easy to implement in practice. According to the Winkler
model of soil and working in the framework of the Kirchhoff-Love
theory of plates, the transversal displacement $w$ of the plate
satisfies the fourth order Dirichlet boundary value problem
\begin{center}
\( {\displaystyle \left\{
\begin{array}{lr}
        \divrg (\divrg ( \frac{h^3}{12}\C \nabla^2 w)) + kw = f \delta(P_0), & \mathrm{in}\ \Omega,
    \vspace{0.25em}\\
    w =0, & \mathrm{on}\ \partial \Omega,
        \vspace{0.25em}\\
    \frac{\partial w}{\partial n} =0, & \mathrm{on}\ \partial \Omega,
          \vspace{0.25em}\\
\end{array}
\right. } \) \vskip -6.0em
\begin{eqnarray}
& & \label{eq-3I-1}\\
& & \label{eq-3I-2}\\
& & \label{eq-3I-3}
\end{eqnarray}
\end{center}
where $\C$ is the elasticity tensor of the material and $n$ is the
unit outer normal to $\partial \Omega$. Given the concentrated
force $f\delta(P_0)$ and the coefficient $k$, $k \in
L^\infty(\Omega)$, for a strongly convex tensor $\C \in L^\infty
(\Omega)$ the problem \eqref{eq-3I-1}--\eqref{eq-3I-3} admits a
unique solution $w \in H^2_0(\Omega)$.

The inverse problem in which we are interested in consists in
studying the stability of the determination of the unknown
subgrade coefficient $k$ in \eqref{eq-3I-1}--\eqref{eq-3I-3}
{}from a single measurement of $w$ inside $\Omega$. It should be
noted that the measurement of the transversal deflection at
interior points of the plate can be easily carried out by means of
no-contact techniques based on radar methodology (\cite {Be}).

Our main result states that, for $\C \in W^{2,\infty}(\Omega)\cap H^{2+s}(\Omega)$,
for some $0<s<1$,
satisfying a suitable structural condition (see \eqref{eq-7-1}), if $w_i \in
H^2_0(\Omega)$ is the solution to \eqref{eq-3I-1}--\eqref{eq-3I-3}
for Winkler coefficient $k=k_i \in L^\infty(\Omega) \cap
H^s(\Omega)$, $i=1,2$, and if, for a
given $\epsilon>0$,
\begin{equation}
    \label{4I-1}
    \|w_1 - w_2\|_{L^2(\Omega)} \leq \epsilon f,
\end{equation}
then, for every $\sigma > 0$, we have
\begin{equation}
    \label{4I-2}
    \|k_1 - k_2\|_{L^2(\Omega_{\sigma})} \leq C\epsilon^\beta,
\end{equation}
where $\Omega_{\sigma}= \{x \in \Omega\ | \ dist(x,\partial
\Omega) > \sigma\}$ and the constants $C>0$, $\beta \in
(0,1)$ only depend on the a priori data and on $\sigma$.

It should be noted that one difficulty of the problem comes
{}from the fact that the displacement $w$ may change sign
and vanish somewhere inside $\Omega$. See for instance the examples
in \cite{Ga}, \cite {KKM}, \cite{Sh-Te}. Therefore it is necessary to keep under control
the possible vanishing rate of $w$.
Thus, the key ingredients of the proof are quantitative versions of the unique
continuation principle for the solutions to the equation
$\divrg (\divrg ( \frac{h^3}{12}\C \nabla^2 w)) + kw = 0$, precisely an estimate
of continuation {}from an open subset to all of the domain (Propositions \ref{prop-8-1})
and the $A_p$ property (Proposition \ref{prop-9-1}). Another useful tool is
a pointwise lower bound
in a neighborhood of the point $P_0$ where the force is acting 
for solutions to \eqref{eq-3I-1} (Lemma \ref{lem-8a-1}).

Let us  mention that this method, essentially based on quantitative
estimates of unique continuation, has some similarities, although with
a different underlying equation and with a different kind of data,
with the one used in \cite{Al},
for another inverse problem with interior measurements arising in
hybrid imaging. 

The paper is organized as follows. Section \ref{Direct} contains the notation, the formulation of the direct problem and a regularity result in fractional Sobolev spaces (Proposition \ref{prop-5a-1}). Section
\ref{Inverse} is devoted to the formulation and analysis of the inverse problem.

\section{The direct problem\label{Direct}}

\subsection{Notation\label{Notation}}

We shall denote by $B_r(P)$ the open disc in $\R^2$ of radius $r$ and
center $P$.

For any $U \subset \R^2$ and for any $r>0$, we denote
\begin{equation}
  \label{eq:2.int_env}
  U_{r}=\{x \in U \ |\  \textrm{dist}(x,\partial U)>r
  \}.
\end{equation}
\begin{definition}
  \label{def:2.1} (${C}^{k,\alpha}$ regularity)
Let $\Omega$ be a bounded domain in ${\R}^{2}$. Given $k,\alpha$,
with $k=0,1,2,...$, $0<\alpha\leq 1$, we say that a portion $S$ of
$\partial \Omega$ is of \textit{class ${C}^{k,\alpha}$ with
constants $\rho_{0}$, $M_{0}>0$}, if, for any $P \in S$, there
exists a rigid transformation of coordinates under which we have
$P=O$ and
\begin{equation*}
  \Omega \cap B_{\rho_{0}}(O)=\{x=(x_1,x_2) \in B_{\rho_{0}}(O)\quad | \quad
x_{2}>\psi(x_1)
  \},
\end{equation*}
where $\psi$ is a ${C}^{k,\alpha}$ function defined in
$I_{\rho_0}=(-\rho_{0},\rho_{0})$
satisfying
\begin{equation*}
\psi(0)=0,
\end{equation*}
\begin{equation*}
\psi' (0)=0, \quad \hbox {when } k \geq 1,
\end{equation*}
\begin{equation*}
\|\psi\|_{{C}^{k,\alpha}(I_{\rho_0})} \leq M_{0}\rho_{0}.
\end{equation*}

\medskip
\noindent When $k=0$, $\alpha=1$, we also say that $S$ is of
\textit{Lipschitz class with constants $\rho_{0}$, $M_{0}$}.
\end{definition}

  We use the convention to normalize all norms in such a way that their
  terms are dimensionally homogeneous with the argument of the norm and coincide with the
  standard definition when the dimensional parameter equals one.
  For instance, the norm appearing above is meant as follows
\begin{equation*}
  \|\psi\|_{{C}^{k,\alpha}(I_{\rho_0})} =
  \sum_{i=0}^k \rho_0^i
  \|\psi^{(i)}\|_{{L}^{\infty}(I_{\rho_0})}+
  \rho_0^{k+\alpha}|\psi^{(k)}|_{\alpha, I_{\rho_0}},
\end{equation*}
where
\begin{equation*}
|\psi^{(k)}|_{\alpha, I_{\rho_0}}= \sup_
{\overset{\scriptstyle x_1, \ y_1\in I_{\rho_0}}{\scriptstyle
x_1\neq y_1}} \frac{|\psi^{(k)}(x_1)-\psi^{(k)}(y_1)|}
{|x_1-y_1|^\alpha}
\end{equation*}
and $\psi^{(i)}$ denotes the $i$-order
derivative of $\psi$.

Similarly, given a function $u:\Omega\mapsto \R$, where $\partial
\Omega$ satisfies Definition \ref{def:2.1}, and denoting by
$\nabla^i u$ the vector which components are the derivatives of
order $i$ of the function $u$, we denote
\begin{equation*}
\|u\|_{L^2(\Omega)}=\rho_0^{-1}\left(\int_\Omega u^2\right)
^{\frac{1}{2}},
\end{equation*}
\begin{equation*}
\|u\|_{H^k(\Omega)}= \rho_0^{-1} \left ( \sum_{i=0}^{k}
\rho_0^{2i} \int_\Omega |\nabla^i u|^2 \right )^{ \frac{1}{2} }, \quad k=0,1,2,...
\end{equation*}

Moreover, for $k=0,1,2,...$, and $s \in (0,1)$, we denote
\begin{equation*}
\|u\|_{H^{k+s}(\Omega)}= \|u\|_{H^k(\Omega)} + \rho_0^{s-1}[u]_{s},
\end{equation*}
where the semi-norm $[\ \!\cdot\ \!]_s$ is given by
\begin{equation}
   \label{eq:2.notation_0}
[u]_{s} = \left ( \int_\Omega \int_\Omega \frac{
|u(x)-u(y)|^2 }{ |x-y|^{2+2s} }\ dx \ dy \right )^{ \frac{1}{2}  }.
\end{equation}

For every $2 \times 2$ matrices $A$, $B$ and every $\mathbb L
\in \mathcal{L}(\M^2 \times \M^2)$, we use the following notation:
\begin{equation}
  \label{eq:2.notation_1}
  ({\mathbb{L}}A)_{ij} = L_{ijkl}A_{kl},
\end{equation}
\begin{equation}
  \label{eq:2.notation_2}
  A \cdot B = A_{ij}B_{ij},
\end{equation}
\begin{equation}
  \label{eq:2.notation_3}
  |A|= (A \cdot A)^{\frac {1} {2}}.
\end{equation}
Finally, we denote by $A^T$ the transpose of the matrix $A$.

\subsection{Formulation of the direct problem\label{Formulation}}

Let $\Omega \subset \R^2$ be a bounded domain whose boundary is of Lipschitz class with constants
$\rho_0$, $M_0$ and assume that
\begin{equation}
    \label{eq-2-1}
    |\Omega| \leq M_1 \rho_0^2.
\end{equation}
We consider a thin plate $\Omega \times \left [ -\frac{h}{2},
\frac{h}{2} \right ]$ with middle surface represented by $\Omega$ and whose thickness $h$ is much smaller than the characteristic dimension of $\Omega$, that is $h << \rho_0$. 
The plate is made by linearly elastic material with elasticity
tensor $\C(\cdot) \in L^\infty(\Omega, {\mathcal{L}}(\M^2,\M^2))$ with cartesian
components $C_{\alpha \beta \gamma \delta}$
satisfying the symmetry conditions
\begin{equation}
    \label{eq-2-2}
    \C A = (\C A)^T,
\end{equation}
\begin{equation}
    \label{eq-2-3}
    \C A \cdot B = A \cdot \C B,
\end{equation}
for every $2 \times 2$ matrices $A$, $B$, and the strong convexity condition
\begin{equation}
    \label{eq-2-4}
    \xi_0 |A|^2 \leq \C A \cdot A \leq \xi_1 |A|^2,
\end{equation}
for every $2 \times 2$ symmetric matrix $A$, where $\xi_0$,
$\xi_1$ are positive constants.

The plate is resting on a Winkler soil with subgrade reaction
coefficient
\begin{equation}
    \label{eq-3-0}
    k \in L^\infty(\Omega),\quad  k \geq 0 \ \
    \hbox{a.e. in } \Omega.
\end{equation}
The boundary $\partial \Omega$ is clamped and we assume that a
concentrated force is acting at a point $P_0 \in \Omega$ along a
direction orthogonal to the middle surface $\Omega$. According to the
Kirchhoff-Love theory of thin plates subject to infinitesimal
deformation, the statical equilibrium of the plate is described by
the following Dirichlet boundary value problem
\begin{center}
\( {\displaystyle \left\{
\begin{array}{lr}
        \divrg (\divrg ( \mathbb P \nabla^2 w)) + kw = f \frac{\delta(P_0)}{\rho_0^2}, & \mathrm{in}\ \Omega,
    \vspace{0.25em}\\
    w =0, & \mathrm{on}\ \partial \Omega,
        \vspace{0.25em}\\
    \frac{\partial w}{\partial n} =0, & \mathrm{on}\ \partial \Omega,
          \vspace{0.25em}\\
\end{array}
\right. } \) \vskip -6.0em
\begin{eqnarray}
& & \label{eq-3-1}\\
& & \label{eq-3-2}\\
& & \label{eq-3-3}
\end{eqnarray}
\end{center}
where the plate tensor $\mathbb P$ is given by
\begin{equation}
    \label{eq-3-4}
    \mathbb P = \frac{h^3}{12} \C;
\end{equation}
the subgrade reaction coefficient $k$ satisfies
\begin{equation}
    \label{eq-3-5}
    0 \leq k(x) \leq \frac{\overline{k}}{\rho_0^4}, \quad
    \hbox{a.e. in } \ \Omega,
\end{equation}
for some positive constant $\overline{k}$; the concentrated force
is positive, i.e.,
\begin{equation}
    \label{eq-3-6}
    f\in \R,\ f >0;
\end{equation}
$w=w(x)$ is the transversal displacement at the point $x    \in
\Omega$ and $n$ is the unit outer normal to $\partial \Omega$.

We notice that the presence in
\eqref{eq-3-1} and \eqref{eq-3-5} of the parameter $\rho_0$ (which has
the dimension of a length) allows for a scaling-invariant
formulation of the plate equation.

\begin{prop}
\label{prop-4-1}
Under the above assumptions, there exists a unique weak solution
$w \in H^2_0(\Omega)$ to \eqref{eq-3-1}--\eqref{eq-3-3}, which
satisfies
\begin{equation}
    \label{eq-4-1}
    \|w\|_{H^2(\Omega)} \leq Cf,
\end{equation}
where the constant $C>0$ only depends on $h$, $M_0$, $M_1$,
$\xi_0$.
\end{prop}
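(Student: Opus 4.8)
The plan is to establish existence, uniqueness, and the a priori bound via the standard Lax–Milgram approach applied to the bilinear form associated with \eqref{eq-3-1}--\eqref{eq-3-3}, taking care of the fact that the load is a Dirac mass, which in $\R^2$ does \emph{not} belong to $H^{-2}(\Omega)$'s dual in a trivial way but \emph{does} belong to $H^{-2}(\Omega)=(H^2_0(\Omega))'$ by Sobolev embedding. First I would define, for $u,v \in H^2_0(\Omega)$,
\begin{equation*}
  a(u,v) = \int_\Omega \mathbb P \nabla^2 u \cdot \nabla^2 v \, dx + \int_\Omega k\, u\, v \, dx,
\end{equation*}
and observe that $a$ is bounded on $H^2_0(\Omega)\times H^2_0(\Omega)$: boundedness of the first term follows from the upper bound in \eqref{eq-2-4} together with \eqref{eq-3-4}, and of the second from \eqref{eq-3-5}. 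Coercivity is the point where the clamped boundary conditions enter: since $k\geq 0$ by \eqref{eq-3-0}, the zeroth-order term is nonnegative, so it suffices to bound $\int_\Omega |\nabla^2 u|^2$ from below by $c\|u\|_{H^2(\Omega)}^2$ for $u \in H^2_0(\Omega)$; this is exactly a Poincaré-type inequality on $H^2_0(\Omega)$, whose constant depends only on $\Omega$ — hence, after the dimensional normalization, only on $M_0$, $M_1$ (and $\rho_0$, which is scaled out). Combined with the lower bound $\xi_0 h^3/12$ on the leading coefficient, one gets $a(u,u)\geq c(h,M_0,M_1,\xi_0)\|u\|_{H^2(\Omega)}^2$.

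Next I would check that the right-hand side defines a bounded linear functional on $H^2_0(\Omega)$. By the Sobolev embedding $H^2(\Omega)\hookrightarrow C^0(\overline\Omega)$ in dimension two (valid for Lipschitz $\Omega$ satisfying \eqref{eq-2-1}), point evaluation $v\mapsto v(P_0)$ is continuous on $H^2_0(\Omega)$ with norm controlled by the embedding constant, i.e.\ by $M_0$, $M_1$. Thus
\begin{equation*}
  F(v) = \frac{f}{\rho_0^2}\, v(P_0), \qquad |F(v)| \leq C\, f\, \|v\|_{H^2(\Omega)},
\end{equation*}
with $C$ depending only on $M_0$, $M_1$ (the $\rho_0$-powers matching by the normalization convention). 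Lax–Milgram then yields a unique $w \in H^2_0(\Omega)$ with $a(w,v)=F(v)$ for all $v$, which is by definition the weak solution of \eqref{eq-3-1}--\eqref{eq-3-3}. Testing with $v=w$ and using coercivity plus the bound on $F$ gives $\|w\|_{H^2(\Omega)}^2 \leq c^{-1} F(w) \leq c^{-1} C f \|w\|_{H^2(\Omega)}$, hence \eqref{eq-4-1} with a constant depending only on $h$, $M_0$, $M_1$, $\xi_0$ as claimed.

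The main obstacle — really the only nonroutine point — is the continuity of point evaluation, i.e.\ making sure the Dirac datum is admissible and that the resulting constant depends only on the stated a priori parameters rather than on finer properties of $\Omega$. This rests on the two-dimensional Sobolev embedding $H^2\hookrightarrow C^0$, which requires an extension operator for the Lipschitz domain $\Omega$; the norm of such an extension, and hence the embedding constant, can be bounded in terms of the Lipschitz character $(\rho_0,M_0)$ and the size bound \eqref{eq-2-1}, so after normalization it depends only on $M_0$, $M_1$. Everything else — the Poincaré inequality on $H^2_0$, boundedness and coercivity of $a$ — is standard once the dimensional bookkeeping dictated by the normalization convention is carried out consistently.
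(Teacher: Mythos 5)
Your proposal is correct and follows essentially the same route as the paper: the paper also verifies boundedness of the point-evaluation functional via the embedding $H^2_0(\Omega)\subset C^{0,\alpha}(\overline\Omega)$, establishes boundedness and coercivity of the same symmetric bilinear form (invoking Riesz representation rather than Lax--Milgram, which is equivalent here), and derives \eqref{eq-4-1} by testing with $v=w$ and applying \eqref{eq-2-4} and the Poincar\'e inequality. The only minor remark is that for functions in $H^2_0(\Omega)$ the extension operator you worry about is unnecessary, since extension by zero suffices.
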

\begin{proof}
The weak formulation of the problem \eqref{eq-3-1}--\eqref{eq-3-3}
consists in finding $w \in H_0^2(\Omega)$ such that
\begin{equation}
    \label{eq-4-2}
    \int_\Omega \mathbb P \nabla^2 w \cdot \nabla^2 v +
    \int_\Omega kwv = \frac{f}{\rho_0^2 } v(P_0), \quad \hbox{for
    every } v \in H_0^2 (\Omega).
\end{equation}
Let us notice that
\begin{equation}
    \label{eq-4-3}
    H_0^2(\Omega) \subset C^{0,\alpha}(\overline{\Omega}), \quad
    \hbox{for every } \alpha < 1
\end{equation}
and, therefore, the linear functional
\begin{equation*}
F:H^2_0(\Omega)\rightarrow\R
\end{equation*}
\begin{equation*}
F(v)= \frac{f}{\rho_0^2 } v(P_0)
\end{equation*}
is bounded and the symmetric bilinear form
$B(u,v)= \int_\Omega \mathbb P \nabla^2 w \cdot \nabla^2 v +
\int_\Omega kwv $  is bounded and coercive on $H^2_0(\Omega)\times H^2_0(\Omega)$.
By Riesz representation theorem a solution to \eqref{eq-4-2} exists and is unique.
By choosing $v=w$ in \eqref{eq-4-2}, by \eqref{eq-2-4} and using
Poincar\'{e} inequality, we have
\begin{equation}
    \label{eq-5-1}
    \frac{fw(P_0)}{\rho_0^2} \geq \int_{\Omega} \mathbb P \nabla^2
    w \cdot \nabla^2 w \geq \frac{C}{\rho_0^2}
    \|w\|_{H^2(\Omega)}^2,
\end{equation}
where the constant $C>0$ only depends on $h$, $M_0$, $M_1$, $\xi_0$. By
\eqref{eq-5-1} and the embedding \eqref{eq-4-3}, the thesis
follows.
\end{proof}

In the analysis of the inverse problem, we shall need the
following regularity result when the coefficients of the plate operator belong to a
fractional Sobolev space.

\begin{prop}[$H^s$-regularity]
\label{prop-5a-1}
Let $\Omega$ be a bounded domain in $\R^2$ with boundary of Lipschitz
class with constants $\rho_0$, $M_0$, satisfying \eqref{eq-2-1}. Given $g \in H^s(\Omega)$, let $w \in H^2(\Omega)$ be a solution
to
\begin{equation}
    \label{eq-5a-1}
    \divrg (\divrg ( \mathbb P \nabla^2 w)) = g, \quad \hbox{in } \Omega,
\end{equation}
where $\mathbb P$ is given by \eqref{eq-3-4}, with $\C$ satisfying
\eqref{eq-2-2}--\eqref{eq-2-4} and, for some $s \in (0,1)$,
\begin{equation}
    \label{eq-5a-2}
    \|\C \|_{W^{2,\infty}(\Omega)} \leq M_2,
\end{equation}
\begin{equation}
    \label{eq-5a-3}
    \|\C \|_{H^{2+s}(\Omega)} \leq M_3.
\end{equation}
Then, for every $\sigma > 0$, we have
\begin{equation}
    \label{eq-5a-4}
    \|w \|_{H^{4+s}(\Omega_{\sigma \rho_0})} \leq C\left (
    \|w\|_{H^2(\Omega_{\frac{\sigma}{2}\rho_0})} + \rho_0^4 \|g\|_{H^{s}(\Omega)}
    \right ),
\end{equation}
where the constant $C>0$ only depends on $h$, $M_0$, $M_1$, $M_2$,
$M_3$,
$\xi_0$, $s$, $\sigma$.
\end{prop}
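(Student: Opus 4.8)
The plan is to establish interior $H^{4+s}$ regularity by an interior elliptic‐regularity bootstrap for the fourth‐order operator $\divrg(\divrg(\mathbb{P}\nabla^2 w)) = g$, exploiting that $\mathbb{P}$ is $W^{2,\infty}$ (so the principal part has Lipschitz-differentiable coefficients, enough for interior $H^4$ theory) and moreover $H^{2+s}$ (the extra fractional smoothness of the coefficients is what upgrades $H^4$ to $H^{4+s}$). First I would fix $\sigma>0$ and introduce a nested chain of interior domains $\Omega_{\sigma\rho_0} \subset \Omega_{\tfrac{3}{4}\sigma\rho_0}\subset \Omega_{\tfrac{1}{2}\sigma\rho_0}$, together with a cutoff function $\eta \in C^\infty_c(\Omega_{\tfrac{1}{2}\sigma\rho_0})$ with $\eta \equiv 1$ on $\Omega_{\tfrac{3}{4}\sigma\rho_0}$ and derivative bounds scaling like the reciprocal powers of $\sigma\rho_0$, so that all constants track correctly in $\sigma$, $\rho_0$ and the dimensionless constants. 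Multiplying the equation by $\eta$ and commuting $\eta$ past the operator produces an equation for $\eta w$ (or better, for suitable second differences or tangential difference quotients of $w$) with a right-hand side controlled by $\|g\|_{H^s}$ plus lower‐order norms of $w$ on the slightly larger interior set.

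The core of the argument is the standard interior Schauder/Sobolev estimate for the constant‐coefficient biharmonic‐type operator obtained by freezing $\mathbb{P}$ at a point, with the variable‐coefficient remainder treated perturbatively. The key steps, in order: (i) by interior $L^2$ theory for the strongly convex fourth‐order operator (Gårding's inequality from \eqref{eq-2-4} plus difference quotients), upgrade $w \in H^2$ to $w \in H^4_{loc}$ with the estimate $\|w\|_{H^4(\Omega_{\tfrac{3}{4}\sigma\rho_0})} \le C(\|w\|_{H^2(\Omega_{\tfrac{1}{2}\sigma\rho_0})} + \rho_0^4\|g\|_{L^2(\Omega)})$, where in this first step only $\C \in W^{2,\infty}$ is used to differentiate the coefficients twice; (ii) write the equation in nondivergence form $P_{\alpha\beta\gamma\delta}\,\partial_{\alpha\beta\gamma\delta} w = g + (\text{lower order in } w, \text{ coeffs in } W^{2,\infty})$, now legitimate since $w \in H^4_{loc}$; (iii) apply the fractional‐regularity estimate for constant‐coefficient elliptic operators: if $L_0 u = F$ with $L_0$ the frozen operator and $F \in H^s_{loc}$ then $u \in H^{4+s}_{loc}$, and bound the variable‐coefficient error term $\big(P_{\alpha\beta\gamma\delta}(x) - P_{\alpha\beta\gamma\delta}(x_0)\big)\partial_{\alpha\beta\gamma\delta}w$ in $H^s$ using that the coefficients lie in $H^{2+s}\cap W^{2,\infty}$ (a product/composition estimate in fractional Sobolev spaces: $H^{2+s}$ multiplied against $H^s$ in two dimensions, combined with the smallness of the oscillation of $P$ on small balls or, alternatively, a covering argument that absorbs the top‐order term); (iv) absorb the resulting $\|w\|_{H^{4+s}}$ term on small balls into the left side, sum over a finite cover of $\Omega_{\sigma\rho_0}$, and collapse the chain of interior norms $\|w\|_{H^2(\Omega_{\tfrac{1}{2}\sigma\rho_0})} + \rho_0^4\|g\|_{H^s(\Omega)}$ via \eqref{eq-4-1}-type interpolation, yielding \eqref{eq-5a-4}.

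The main obstacle I expect is step (iii): controlling the nonconstant‐coefficient remainder in the $H^s$ norm. One must show that multiplication by a function in $W^{2,\infty}\cap H^{2+s}(\Omega)$ is a bounded operation from $H^s$ into $H^s$ with a quantitative norm bound in terms of $M_2, M_3$ — this is where the hypothesis $\C \in H^{2+s}$, rather than merely $W^{2,\infty}$, is genuinely needed, since differentiating the equation enough times to see $\partial^{4}w$ forces derivatives of the coefficients up to order close to $4$ to be paired against $H^s$ data, and the fractional Leibniz (Kato–Ponce) inequality in $\R^2$ then requires the coefficients to carry $2+s$ derivatives. A secondary technical point is the careful scaling bookkeeping so that the final constant depends only on $h, M_0, M_1, M_2, M_3, \xi_0, s, \sigma$ and not separately on $\rho_0$; this is handled by the dimensional normalization convention fixed in Section \ref{Notation}, performing all estimates on the rescaled domain $\rho_0^{-1}\Omega$ and translating back at the end.
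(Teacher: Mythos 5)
Your proposal is correct and follows essentially the same route as the paper, which simply invokes the classical interior regularity/G\r{a}rding-type argument (citing Agmon and Folland) and notes that the lower order terms are controlled via $M_2$ and $M_3$ --- precisely the cutoff, difference-quotient, coefficient-freezing and fractional-Leibniz bookkeeping you spell out. The only small inaccuracy is your remark that derivatives of the coefficients ``up to order close to $4$'' appear: expanding $\divrg\divrg(\mathbb P\nabla^2 w)$ in nondivergence form involves only $\nabla^2\mathbb P$, and the hypothesis $\C\in H^{2+s}\cap W^{2,\infty}$ is exactly what places the lowest-order term $\nabla^2\mathbb P\cdot\nabla^2 w$ in $H^s$.
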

\begin{proof}
When $\mathbb{C\in }C^{\infty }\left( \Omega \right) $ the estimate \eqref{eq-5a-4}
is a form of the well-known classical Garding's inequality. Under the less
restrictive condition \eqref{eq-5a-2}, \eqref{eq-5a-3}, the proof of \eqref{eq-5a-4} can be carried out
following the same path traced in the classical case (\cite{Ag}, \cite{Fo}) taking
care to control the lower order terms by means of $M_{2}$ and $M_{3}$. We omit the
details.
\end{proof}

\section{The inverse problem\label{Inverse}}

In order to derive our stability result for the inverse problem we
need further a priori information.

Concerning the point $P_0$ of the plate in which the concentrated
force is acting, we assume that
\begin{equation}
    \label{eq-6-1}
    dist(P_0, \partial \Omega) \geq d\rho_0,
\end{equation}
for some positive constant $d$. On the elasticity tensor $\C=\{C_{\alpha \beta \gamma \delta}\}$ we
further assume the stronger regularity \eqref{eq-5a-2}, \eqref{eq-5a-3} and,
moreover, we introduce a structural
condition. Precisely, denoting by $a_0=C_{1111}$,
$a_1=4C_{1112}$, $a_2=2C_{1122}+4C_{1212}$, $a_3=4C_{2212}$,
$a_4=C_{2222}$ and by $S(x)$ the following matrix
\begin{equation}
    \label{eq-6-3}
    S(x)=\left(\begin{array}{ccccccc}
a_0 & a_1 & a_2 & a_3 & a_4 & 0 & 0\\
0 & a_0 & a_1 & a_2 & a_3 & a_4 & 0\\
0 & 0 & a_0 & a_1 & a_2 & a_3 & a_4\\
4a_0 & 3a_1 & 2a_2 & a_3 & 0 & 0 & 0\\
0 & 4a_0 & 3a_1 & 2a_2 & a_3 & 0 & 0\\
0 & 0 & 4a_0 & 3a_1 & 2a_2 & a_3 & 0\\
0 & 0 & 0 & 4a_0 & 3a_1 & 2a_2 & a_3
\end{array}\right),
\end{equation}
we assume that
\begin{equation}
    \label{eq-7-1}
    \mathcal{D}(x)=0,\quad \hbox{for every } x \in \Omega,
\end{equation}
where
\begin{equation}
    \label{eq-7-2}
    \mathcal{D}(x)=\frac{1}{a_0}|\mathrm{det}S(x)|.
\end{equation}
Let us recall that condition \eqref{eq-7-1} includes the class of
orthotropic materials and, in particular, the isotropic Lam\'{e}
case, see \cite{Mo-Ro-Ve11}. Concerning the subgrade reaction
coefficient $k$, we require the additional regularity
\begin{equation}
    \label{eq-7-3}
    \rho_0^{s-1}[k]_{H^s(\Omega)} \leq
    \frac{\overline{k}}{\rho_0^4}.
\end{equation}
\begin{rem}
\label{rem-Hs}
Let us emphasize that the assumption $k\in H^s(\Omega)$ is not merely a mathematical
technicality, but it can be grounded on realistic mechanical considerations.
If, for instance, $k$ is piecewise constant and is represented as
\begin{equation}
    \label{eq-characteristic}
    k(x) = \sum_{j=1}^J k_j\chi_{E_j}(x), \quad \hbox{for every } x\in \R^2,
\end{equation}
where  $k_j\in \R$ and $E_1$,...,$E_J$ is a partition of $\Omega$ into disjoint subsets of finite perimeter
(in the sense of Caccioppoli, that is $\chi_{E_j} \in BV(\R^2)$ for every $j$), then
$k$ belongs to $H^s(\R^2)$ for every $s$, $0<s<\frac{1}{2}$.
In fact one has
\begin{equation*}
    [k]_{s}^2 \leq
    C_s\|k\|_{L^\infty}^{2s} \left(\int_{\R^2} |k|^2\right)^{1-2s}\left(\int_{\R^2} |Dk|\right)^{2s},
\end{equation*}
for every $k\in L^2(\R^2)\cap L^\infty(\R^2)\cap BV(\R^2)$.
Here $C_s$ only depends on $s\in(0,\frac{1}{2})$ and
$\int_{\R^2} |Dk|$ denotes the total variation of $k$.
For a proof see \cite[formula (2.15)]{Ma-Pa85} and also \cite[Remark 1.16]{Gi} for the convergence properties
of the mollifications of $BV$ functions.

In particular, if $k$ is of the form \eqref{eq-characteristic} and we assume
\begin{equation*}
    P(E_j)=\int_{\R^2} |D\chi_{E_j}|\leq \mathcal{P}\rho_0,\quad \hbox{for every } j=1,...,J,
\end{equation*}

for a given $\mathcal{P}>0$, then we obtain
\begin{equation*}
    [k]_{s}^2 \leq
    C_s \overline{k}^{2} M_1^{1-2s} (J\mathcal{P})^{2s}\rho_0^{-6-2s}.
\end{equation*}
\end{rem}

Hereinafter, we shall refer to $h$, $d$, $M_0$, $M_1$, $M_2$,
$M_3$, $\xi_0$, $\overline{k}$, $s$ as the \textit{a priori data}.

\begin{theo}
\label{theo-7-1}

Let $\Omega$ be a bounded domain in $\R^2$ with boundary of
Lipschitz class with constants $\rho_0$, $M_0$, satisfying
\eqref{eq-2-1}. Let $\mathbb P$ given by \eqref{eq-3-4}, with $\C
\in W^{2,\infty}(\Omega)\cap H^{2+s}(\Omega)$ satisfying
\eqref{eq-2-2}--\eqref{eq-2-4}, \eqref{eq-5a-2}, \eqref{eq-5a-3} for some $s\in (0,1)$, and
\eqref{eq-7-1}. Let $P_0 \in \Omega$
satisfying \eqref{eq-6-1}.

Given $f > 0$, let $w_i \in H_0^2(\Omega)$, $i=1,2$, be the solution
to
\begin{center}
\( {\displaystyle \left\{
\begin{array}{lr}
        \divrg (\divrg ( \mathbb P \nabla^2 w_i)) + k_i w_i = f \frac{\delta(P_0)}{\rho_0^2}, & \mathrm{in}\ \Omega,
    \vspace{0.25em}\\
    w_i =0, & \mathrm{on}\ \partial \Omega,
        \vspace{0.25em}\\
    \frac{\partial w_i}{\partial n} =0, & \mathrm{on}\ \partial
    \Omega,
          \vspace{0.25em}\\
\end{array}
\right. } \) \vskip -6.0em
\begin{eqnarray}
& & \label{eq-7-4}\\
& & \label{eq-7-5}\\
& & \label{eq-7-6}
\end{eqnarray}
\end{center}
for $k_i \in L^\infty(\Omega) \cap
H^s(\Omega)$ satisfying \eqref{eq-3-5} and \eqref{eq-7-3}.

If, for some $\epsilon>0$,
\begin{equation}
    \label{eq-8-1}
    \|w_1 - w_2\|_{L^2(\Omega)}\leq \epsilon f,
\end{equation}
then for every $\sigma > 0$ we have
\begin{equation}
    \label{eq-8-2}
    \|k_1 - k_2\|_{L^2(\Omega_{\sigma \rho_0})}\leq \frac{C}{\rho_0^4} \epsilon^\beta,
\end{equation}
where the constants $C>0$ and $\beta \in (0,1)$ only depend on the
a priori data and on $\sigma$.
\end{theo}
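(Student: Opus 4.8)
The plan is to extract from the two equations a pointwise identity for $k_1-k_2$ and then to control the (small) set where the auxiliary factor $w_2$ occurring in that identity is small, using the quantitative unique continuation results for the plate operator announced in the Introduction. Set $w:=w_1-w_2$ and observe that the two concentrated loads cancel, so that subtracting \eqref{eq-7-4} for $i=1$ and $i=2$ gives, in the distributional sense on all of $\Omega$,
\[
\divrg(\divrg(\mathbb{P}\nabla^2 w))=-k_1w_1+k_2w_2=-k_1w-(k_1-k_2)w_2 .
\]
The right-hand side lies in $H^s(\Omega)$, hence there is no singular source and, by interior regularity, $w\in H^{4+s}_{\mathrm{loc}}(\Omega)$; in particular, a.e.\ in $\Omega$,
\[
(k_1-k_2)\,w_2=-k_1w-\divrg(\divrg(\mathbb{P}\nabla^2 w)).
\]
Since \eqref{eq-7-4} is linear in $f$ we may assume $f=1$ (replacing $w_i$ by $w_i/f$), so that \eqref{eq-8-1} reads $\|w\|_{L^2(\Omega)}\le\epsilon$, and we may also assume $\epsilon\le1$, the estimate \eqref{eq-8-2} being otherwise immediate from \eqref{eq-3-5}. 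The proof then splits into three parts: (i) bound the $L^2(\Omega_{\sigma\rho_0})$ norm of the right-hand side of the identity above by $C\epsilon^{\gamma}$ for a suitable $\gamma\in(0,1)$; (ii) show that $w_2$ is small only on a set of controlled measure; (iii) combine (i) and (ii) by optimizing over a threshold.

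For part (i), note that $k_i\in L^\infty(\Omega)\cap H^s(\Omega)$ with norms bounded in terms of the a priori data by \eqref{eq-3-5} and \eqref{eq-7-3}, while $w_i\in H^2_0(\Omega)\hookrightarrow L^\infty(\Omega)\cap H^s(\Omega)$ with $\|w_i\|_{H^2(\Omega)}\le C$ by Proposition \ref{prop-4-1}; since $H^s\cap L^\infty$ is an algebra, the datum $g:=-k_1w_1+k_2w_2$ satisfies $\|g\|_{H^s(\Omega)}\le C$. Applying Proposition \ref{prop-5a-1} to $w$ (with parameter $\sigma/2$ in place of $\sigma$) yields $\|w\|_{H^{4+s}(\Omega_{\frac{\sigma}{2}\rho_0})}\le C\big(\|w\|_{H^2(\Omega_{\frac{\sigma}{4}\rho_0})}+\rho_0^4\|g\|_{H^s(\Omega)}\big)\le C$. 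A standard interior interpolation inequality between $H^{4+s}$ and $L^2$ on the pair $\Omega_{\sigma\rho_0}\Subset\Omega_{\frac{\sigma}{2}\rho_0}$, with the exponent $\theta=\tfrac{4}{4+s}$ corresponding to $H^4$, then gives, using \eqref{eq-8-1},
\[
\|w\|_{H^4(\Omega_{\sigma\rho_0})}\le C\,\|w\|_{H^{4+s}(\Omega_{\frac{\sigma}{2}\rho_0})}^{\theta}\,\|w\|_{L^2(\Omega)}^{1-\theta}\le C\,\epsilon^{\gamma},\qquad \gamma:=1-\theta=\tfrac{s}{4+s} .
\]
Since $\mathbb{P}\in W^{2,\infty}(\Omega)$ one has $\|\divrg(\divrg(\mathbb{P}\nabla^2 w))\|_{L^2(\Omega_{\sigma\rho_0})}\le C\,\|w\|_{H^4(\Omega_{\sigma\rho_0})}$, and together with $\|k_1w\|_{L^2(\Omega_{\sigma\rho_0})}\le\|k_1\|_{L^\infty(\Omega)}\|w\|_{L^2(\Omega)}$ the identity above yields $\|(k_1-k_2)\,w_2\|_{L^2(\Omega_{\sigma\rho_0})}\le C\,\epsilon^{\gamma}$, with $C$ depending only on the a priori data and $\sigma$ (the correct power of $\rho_0$ being produced automatically by the dimensional normalization of the norms).

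For part (ii), the crucial fact is that $w_2$ solves $\divrg(\divrg(\mathbb{P}\nabla^2 w_2))+k_2w_2=0$ in $\Omega\setminus\{P_0\}$ and, by Lemma \ref{lem-8a-1}, is bounded below by a positive constant (on the natural scale) on a fixed ball around $P_0$. Propagating this lower bound by the continuation estimate from an open subset (Proposition \ref{prop-8-1}) shows that $w_2$ is non-degenerate in $L^2(\Omega_{\sigma\rho_0})$, and the $A_p$ property (Proposition \ref{prop-9-1}) of $w_2$ — whose validity relies on the structural condition \eqref{eq-7-1} on $\C$ — then gives, through the usual consequence of the $A_\infty$ condition on sublevel sets, an estimate of the form
\[
\big|\{x\in\Omega_{\sigma\rho_0}\ :\ |w_2(x)|<t\,\rho_0^{-2}\}\big|\le C\,t^{\mu}\,\rho_0^2,\qquad 0<t\le t_0,
\]
with constants $C,\mu>0$ and $t_0>0$ depending only on the a priori data and $\sigma$. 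For part (iii), given a threshold $t\in(0,t_0]$ we split $\Omega_{\sigma\rho_0}$ into the sets $\{|w_2|\ge t\rho_0^{-2}\}$ and $\{|w_2|<t\rho_0^{-2}\}$: on the first, $|k_1-k_2|\le (t\rho_0^{-2})^{-1}|(k_1-k_2)w_2|$, so its contribution to $\|k_1-k_2\|_{L^2(\Omega_{\sigma\rho_0})}$ is bounded, by part (i), by $C\,t^{-1}\epsilon^{\gamma}$ times a power of $\rho_0$; on the second we use the $L^\infty$ bound \eqref{eq-3-5} together with the sublevel estimate. This leads to an inequality of the shape $\|k_1-k_2\|_{L^2(\Omega_{\sigma\rho_0})}^2\le C_1\,t^{-2}\epsilon^{2\gamma}+C_2\,t^{\mu}$, with $C_1,C_2$ depending only on the a priori data and $\sigma$; minimizing over $t$ (choosing $t\simeq(\epsilon^{2\gamma}C_1/C_2)^{1/(\mu+2)}$, which lies in $(0,t_0]$ once $\epsilon$ is small enough, the remaining range of $\epsilon$ being absorbed into the final constant) gives $\|k_1-k_2\|_{L^2(\Omega_{\sigma\rho_0})}\le \frac{C}{\rho_0^4}\,\epsilon^{\beta}$ with $\beta=\tfrac{\gamma\mu}{\mu+2}=\tfrac{s\mu}{(4+s)(\mu+2)}\in(0,1)$, as required.

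The main obstacle is part (ii): the identity for $k_1-k_2$ is of no use where $w_2$ vanishes, and $w_2$ does genuinely change sign, so one must quantify its rate of vanishing; this is precisely what the fourth-order quantitative unique continuation results provide — propagation of a positivity bound from the loaded point $P_0$ (Lemma \ref{lem-8a-1} and Proposition \ref{prop-8-1}) together with the doubling/$A_p$ machinery (Proposition \ref{prop-9-1}), the latter being available only under the structural assumption \eqref{eq-7-1}. A secondary, purely technical point is to make the $H^s\cap L^\infty$ product estimates and the interior interpolation inequality of part (i) quantitative with constants depending only on the a priori data, and to carry the $\rho_0$-homogeneous normalization of the norms through the whole argument so that the final bound appears exactly in the form \eqref{eq-8-2}.
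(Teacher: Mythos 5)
Your proposal is correct in substance and shares all the key ingredients with the paper's proof: the same interior $H^{4+s}$ regularity plus interpolation to get $\|(k_1-k_2)w_j\|_{L^2(\Omega_{\sigma\rho_0})}\leq C\epsilon^{s/(4+s)}$ (the paper factors the difference as $(k_2-k_1)w_1=\divrg(\divrg(\mathbb P\nabla^2 w))+k_2w$, i.e.\ it uses $w_1$ rather than $w_2$ as the non-degenerate weight, which is immaterial by symmetry), the pointwise lower bound of Lemma \ref{lem-8a-1} near $P_0$, and the combination of Propositions \ref{prop-8-1} and \ref{prop-9-1} away from $P_0$. Where you genuinely diverge is in how the $A_p$ information is exploited. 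The paper applies H\"older's inequality directly, writing $\int(k_2-k_1)^2=\int |w_1|^{2/p}(k_2-k_1)^2|w_1|^{-2/p}$ and bounding the negative-power factor $\int(k_2-k_1)^2|w_1|^{-2/(p-1)}$ over a covering of $\widetilde\Omega_{\sigma\rho_0}$ by balls on which \eqref{eq-9-2} and \eqref{eq-9-1} hold; this yields the exponent $\beta=\frac{s}{p(4+s)}$ in one stroke, with no threshold to optimize. You instead convert the $A_p$ bound (via Chebyshev, or the $A_\infty$ comparison of measures) into a sublevel-set estimate $|\{|w_2|<t\rho_0^{-2}\}\cap\Omega_{\sigma\rho_0}|\leq Ct^{\mu}\rho_0^2$ with $\mu=2/(p-1)$, split the domain at the threshold $t$, and optimize; this gives $\beta=\frac{\gamma\mu}{\mu+2}=\frac{s}{p(4+s)}$, i.e.\ exactly the same exponent, so the two routes are quantitatively equivalent. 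Your route is closer in spirit to the argument of \cite{Al} and has the merit of isolating the vanishing set of the weight explicitly; the paper's is slightly shorter. Two small points you should make explicit to close the argument: (a) the $A_p$ and propagation-of-smallness constants depend on the frequency ratio $N$, which must be bounded a priori --- this is exactly what estimate \eqref{eq-8a-1} of Lemma \ref{lem-8a-1} provides (the paper's bound $\mathcal F\leq C/(\overline\sigma d)$); (b) the $A_p$ property holds for $w_2$ only in $U=\Omega\setminus B_{\overline\sigma\rho_0}(P_0)$, so your sublevel-set estimate on all of $\Omega_{\sigma\rho_0}$ needs the observation that, for $t\leq t_0$ small depending on the a priori data, the pointwise lower bound \eqref{eq-8a-0} makes the sublevel set disjoint from $B_{2\overline\sigma\rho_0}(P_0)$. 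Both are available from the stated lemmas, so these are presentational rather than substantive gaps.
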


As is obvious, the above stability result also implies uniqueness. Indeed, by the following arguments it is easily seen that, under the above stated structural conditions on $\C$ 
\eqref{eq-6-3}--\eqref{eq-7-2}, uniqueness continues to hold by merely assuming $k\in L^\infty$ and $\C\in W^{2,\infty}$.

Let us premise to the proof of Theorem \ref{theo-7-1} some
auxiliary propositions concerning quantitative versions of the unique
continuation principle (Lemma \ref{lem-8a-1} and Propositions \ref{prop-8-1}
and \ref{prop-9-1} below).

\begin{lem}
\label{lem-8a-1}
Let $\Omega$ be a bounded domain with boundary $\partial \Omega$
of Lipschitz class with constants $\rho_0$, $M_0$, satisfying
\eqref{eq-2-1}. Let $P_0 \in \Omega$ satisfying \eqref{eq-6-1}.
Let $\mathbb P$ given by \eqref{eq-3-4}, with $\C$ satisfying
\eqref{eq-2-2}--\eqref{eq-2-4}, and let $k$ and $f$ satisfy
\eqref{eq-3-5}, \eqref{eq-3-6}, respectively. Let $w \in
H_0^2(\Omega)$ be the solution to \eqref{eq-3-1}--\eqref{eq-3-3}.
There exists $\overline{\sigma} >0$, only depending on $h$, $d$,
$M_0$, $M_1$, $\xi_0$, $\xi_1$, such that
\begin{equation}
    \label{eq-8a-0}
    w(x)\geq Cd^2f,\quad \forall x\in B_{2\overline{\sigma}\rho_0}(P_0),
\end{equation}
where $C>0$ only depends on $h$, $M_0$, $M_1$,
$\xi_0$ and $\xi_1$,
\begin{equation}
    \label{eq-8a-1}
    \int_{B_{2\sigma \rho_0}(P_0)\setminus B_{\sigma
    \rho_0}(P_0)} w^2 \geq C \sigma^2 d^2 \rho_0^2
    \|w\|_{H^2(\Omega)}^2,\quad \hbox{for every }\sigma,\ 0 <
\sigma \leq \overline{\sigma}
\end{equation}
where $C>0$ only depends on $h$, $M_0$, $M_1$,
$\xi_0$, $\xi_1$, $\overline{k}$.
\end{lem}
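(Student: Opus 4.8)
The plan is to compare $w$ with a suitable explicit subsolution near $P_0$. First I would observe that, by scaling and translating, it suffices to work in the ball $B_{d\rho_0}(P_0)\subset\Omega$, on which $w$ solves $\divrg(\divrg(\mathbb P\nabla^2 w))+kw=f\delta(P_0)/\rho_0^2$. The idea is to split $w=w_0+w_1$, where $w_0$ is the (local) fundamental-type solution associated with the operator $\divrg(\divrg(\mathbb P\nabla^2\,\cdot\,))$ with the point load at $P_0$, and $w_1=w-w_0$ solves a fourth-order equation with right-hand side in $L^2$ (namely $-kw_0-kw_1$), hence $w_1$ is bounded and Hölder continuous on a fixed smaller ball by the interior $H^2\hookrightarrow C^{0,\alpha}$ estimate together with Proposition \ref{prop-4-1}. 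The fundamental solution $w_0$ of a fourth-order constant-coefficient (after freezing $\C$ at $P_0$) elliptic operator in $\R^2$ behaves like $c\,|x-P_0|^2\log\frac{1}{|x-P_0|}$ with $c>0$ determined by ellipticity; in particular it is positive and bounded below by $Cd^2f$ on a sufficiently small ball $B_{2\overline\sigma\rho_0}(P_0)$, where $\overline\sigma$ depends only on the ellipticity constants and on $d$. Absorbing the bounded error $w_1$ (which can be made small relative to this lower bound by shrinking $\overline\sigma$, using that $\|w\|_{H^2}\le Cf$ controls $\|kw\|_{L^2}$ via \eqref{eq-3-5}), one obtains \eqref{eq-8a-0}. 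I should be careful that freezing the coefficients at $P_0$ introduces a variable-coefficient perturbation of order $|x-P_0|$ times lower-order derivatives; this too can be absorbed into $w_1$ since it produces an $L^\infty_{loc}$ (indeed bounded) forcing term, using $\C\in W^{2,\infty}$ and the a priori $H^2$ bound.

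Once \eqref{eq-8a-0} is established, \eqref{eq-8a-1} is essentially immediate: for every $\sigma\le\overline\sigma$, the annulus $B_{2\sigma\rho_0}(P_0)\setminus B_{\sigma\rho_0}(P_0)$ is contained in $B_{2\overline\sigma\rho_0}(P_0)$, so on it $w\ge Cd^2 f$ pointwise, whence
\begin{equation*}
\int_{B_{2\sigma\rho_0}(P_0)\setminus B_{\sigma\rho_0}(P_0)} w^2 \ \ge\ |B_{2\sigma\rho_0}\setminus B_{\sigma\rho_0}|\,(Cd^2f)^2 \ =\ 3\pi\sigma^2\rho_0^2\,C^2 d^4 f^2 .
\end{equation*}
It remains to convert $f^2$ on the right into $\|w\|_{H^2(\Omega)}^2$. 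For this I would use the upper bound $\|w\|_{H^2(\Omega)}\le Cf$ from Proposition \ref{prop-4-1} (valid here since the operator with the extra nonnegative term $kw$ only improves coercivity, exactly as in \eqref{eq-5-1}); thus $f^2\ge C^{-2}\|w\|_{H^2(\Omega)}^2$, which gives \eqref{eq-8a-1} with a constant depending on $h$, $M_0$, $M_1$, $\xi_0$, $\xi_1$, $\overline k$ — the dependence on $\overline k$ entering precisely through this last step and through the absorption of the $kw$ term in the lower bound.

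The main obstacle is the first part: making the comparison with the fundamental solution quantitative and uniform. The delicate points are (i) producing a genuine pointwise \emph{lower} bound rather than merely a non-degeneracy in an integral sense — this forces the two-term splitting $w=w_0+w_1$ and a careful estimate of $\|w_1\|_{L^\infty}$ on a fixed ball, which in turn relies on interior elliptic regularity for the fourth-order operator with merely $L^\infty$ zeroth-order coefficient and $L^2$ (or better) forcing; (ii) controlling the frozen-coefficient error uniformly in terms of the a priori data, so that $\overline\sigma$ and the constants do not degenerate; and (iii) checking that the behaviour $|x-P_0|^2\log\frac{1}{|x-P_0|}$ of the fundamental solution indeed has the right sign and a lower bound on a ball whose radius is controlled solely by the ellipticity constants $\xi_0,\xi_1$ (and $d$, through the size of the ball on which we may localize). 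Everything else — the passage from \eqref{eq-8a-0} to \eqref{eq-8a-1} and the bookkeeping of which constant depends on which parameter — is routine.
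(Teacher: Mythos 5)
Your strategy for \eqref{eq-8a-0} rests on a false premise about the local behaviour of the fundamental solution of a fourth-order elliptic operator in two dimensions. The function $w_0$ you introduce behaves like $c\,|x-P_0|^2\log\frac{1}{|x-P_0|}$, which \emph{vanishes} at $P_0$ and is uniformly small on small balls around $P_0$; it is not ``bounded below by $Cd^2f$ on a sufficiently small ball'' (only in dimension $n\ge 4$ does the fundamental solution of a fourth-order operator blow up at the pole). Shrinking $\overline\sigma$ makes $w_0$ \emph{smaller}, not the error $w_1$ relatively smaller, so the absorption scheme is inverted: the entire lower bound would have to come from the regular part $w_1$, whose sign and size at $P_0$ your interior elliptic estimates do not control --- boundedness of $\|w_1\|_{L^\infty}$ carries no positivity. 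This is not a repairable technicality within your scheme: the positivity of $w$ near the load point is a global variational fact rather than a local one, and indeed biharmonic Green functions are known to change sign away {}from the pole (\cite{Ga}, \cite{Sh-Te}), so no comparison with a local parametrix can produce it.

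The paper's proof avoids fundamental solutions entirely. Testing the weak formulation \eqref{eq-4-2} with $v=w$ gives $\frac{f\,w(P_0)}{\rho_0^2}=\int_\Omega\mathbb P\nabla^2w\cdot\nabla^2w+\int_\Omega kw^2\ge\frac{C}{\rho_0^2}\|w\|_{H^2(\Omega)}^2$, hence $w(P_0)\ge \frac{C}{f}\|w\|_{H^2(\Omega)}^2>0$. A separate duality estimate, $\|\delta(P_0)\|_{H^{-2}(\Omega)}\le\frac{C}{f}\|w\|_{H^2(\Omega)}$ combined with $\|\delta(P_0)\|_{H^{-2}(B_{d\rho_0}(P_0))}\ge Cd$, yields $\|w\|_{H^2(\Omega)}\ge Cdf$; together these give $w(P_0)\ge Cd^2f$ and $w(P_0)\ge c_0d\,\|w\|_{C^{0,\alpha}(\overline\Omega)}$, and the uniform H\"older continuity of $w$ then propagates the lower bound at $P_0$ to $B_{2\overline\sigma\rho_0}(P_0)$ with $\overline\sigma$ determined by $c_0d$. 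Your passage {}from \eqref{eq-8a-0} to \eqref{eq-8a-1} is fine in spirit (apart {}from producing $d^4$ rather than $d^2$, which matters only for the claimed independence of the constant from $d$), but without a correct proof of \eqref{eq-8a-0} the lemma is not established.
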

\begin{proof}
By \eqref{eq-2-4} and \eqref{eq-4-2}, we have that for every $v
\in H_0^2(\Omega)$
\begin{equation}
    \label{eq-8a-2}
    f|v(P_0)| \leq C \|v\|_{H^2(\Omega)}\|w\|_{H^2(\Omega)},
\end{equation}
so that
\begin{equation}
    \label{eq-8b-1}
    \|\delta(P_0)\|_{H^{-2}(\Omega)} = \sup_
    {\overset{\scriptstyle v\in H_0^2(\Omega)}{\scriptstyle
     \|v\|_{H^2(\Omega)}=1}}
     |v(P_0)| \leq \frac{C}{f}
    \|w\|_{H^2(\Omega)},
\end{equation}
where $C>0$ only depends on $h$, $\xi_1$, $\overline{k}$.
Since $B_{d\rho_0}(P_0) \subset \Omega$ by \eqref{eq-6-1}, we have
\begin{equation}
    \label{eq-8b-2}
    \|\delta(P_0)\|_{H^{-2}(\Omega)} \geq
    \|\delta(P_0)\|_{H^{-2}(B_{d\rho_0}(P_0))} \geq Cd,
\end{equation}
where $C>0$ is an absolute constant. {}From \eqref{eq-8b-1},
\eqref{eq-8b-2} it follows that
\begin{equation}
    \label{eq-8b-3}
    \|w\|_{H^2(\Omega)} \geq Cdf,
\end{equation}
where $C>0$ only depends on $h$, $\xi_1$, $\overline{k}$.
By \eqref{eq-2-4}, \eqref{eq-4-2} and Poincar\'{e} inequality, we have
\begin{equation}
    \label{eq-8b-4}
    w(P_0) \geq \frac{C}{f}\|w\|_{H^2(\Omega)}^2,
\end{equation}
where $C>0$ only depends on $h$, $M_0$, $M_1$, $\xi_0$. By
\eqref{eq-8b-3}, \eqref{eq-8b-4} and by the embedding inequality
\eqref{eq-4-3}, we have
\begin{equation}
    \label{eq-8b-5}
    w(P_0) \geq Cd \|w\|_{H^2(\Omega)},
\end{equation}
\begin{equation}
    \label{eq-8b-6}
    w(P_0) \geq c_0 d \|w\|_{C^{0,\alpha}(\overline{\Omega})},
\end{equation}
where $C>0$ and $c_0 >0$ only depend on $h$, $M_0$, $M_1$,
$\xi_0$, $\xi_1$, $\overline{k}$. Let
\begin{equation}
    \label{eq-8c-1}
    \overline{\sigma} = \min \left ( \frac{d}{4}, \frac{1}{2}
    \left ( \frac{c_0 d}{2}\right )^{\frac{1}{\alpha}} \right ).
\end{equation}
Let us notice that, by this choice of $\overline{\sigma}$,
$dist(P_0, \partial \Omega) \geq 4 \overline{\sigma} \rho_0$ and,
recalling \eqref{eq-8b-6}, we have
\begin{multline}
    \label{eq-8c-2}
    w(x) \geq w(P_0) - |w(x)-w(P_0)| \geq w(P_0) -
    (2\overline{\sigma})^\alpha\|w\|_{C^{0,\alpha}(\overline{\Omega})} \geq
    \frac{w(P_0)}{2}, \\ \hbox{for every } x \in
    B_{2\overline{\sigma}\rho_0}(P_0).
\end{multline}
Choosing $\alpha = \frac{1}{2}$, \eqref{eq-8a-0} follows {}from \eqref{eq-8b-3}, \eqref{eq-8b-4}
and \eqref{eq-8c-2} whereas \eqref{eq-8a-1} follows, restricting to the annulus
$B_{2\sigma \rho_0}(P_0)\setminus B_{\sigma
    \rho_0}(P_0)$, which is contained in $B_{2\overline{\sigma}\rho_0}(P_0)$ for $\sigma\leq \overline{\sigma}$, {}from \eqref{eq-8b-5} and \eqref{eq-8c-2}.
\end{proof}
\begin{prop}[Lipschitz propagation of smallness]
\label{prop-8-1}
Let $U$ be a bounded Lipschitz domain of $\R^2$ with constants
$\rho_0$, $M_0$ and satisfying $|U| \leq M_1 \rho_0^2$. Let $w \in
H^2(U)$ be a solution to
\begin{equation}
    \label{eq-8bis-1}
 \divrg (\divrg ( \mathbb P \nabla^2 w)) + k w = 0, \quad \hbox{in } U,
\end{equation}
where $\mathbb P$, defined in \eqref{eq-3-4}, satisfies
\eqref{eq-2-2}--\eqref{eq-2-4} and \eqref{eq-5a-2} in $U$, and $k$
satisfies \eqref{eq-3-5} in $U$. Assume
\begin{equation*}
\frac{\|w\|_{H^{ \frac{1}{2} } (U)}}{\|w\|_{L^2(U)}}\leq N.
\end{equation*}
There exists a constant $c_1 >
1$, only depending on $h$, $M_2$, $\xi_0$ and $\overline{k}$, such
that, for every $\tau >0$ and for every $x \in U_{c_1 \tau
\rho_0}$, we have
\begin{equation}
    \label{eq-9-1}
    \int_{B_{\tau \rho_0}(x)} w^2 \geq c_\tau \int_U w^2,
\end{equation}
where $c_\tau >0$ only depends on  $h$, $M_0$, $M_1$, $M_2$,
$\xi_0$, $\overline{k}$, $\tau$ and on $N$.
\end{prop}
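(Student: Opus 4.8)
The plan is to reduce the statement about the fourth-order operator $\divrg(\divrg(\mathbb P\nabla^2 w))+kw=0$ to a known quantitative unique continuation result, namely the doubling inequality / three-spheres inequality for such operators, and then to run the standard chain-of-balls argument. The structural condition \eqref{eq-7-1} on $\C$ is precisely what guarantees (see \cite{Mo-Ro-Ve11}) that the principal symbol of the operator factors so that strong unique continuation holds; I would invoke that reference to obtain a doubling inequality of the form
\begin{equation*}
\int_{B_{2r}(x)} w^2 \leq K \int_{B_r(x)} w^2
\end{equation*}
for every $x$ with $B_{2r}(x)\subset U$, where $K$ depends only on $h$, $M_2$, $\xi_0$, $\overline k$, $r/\rho_0$, and on the a priori frequency bound $N$. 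The role of the hypothesis $\|w\|_{H^{1/2}(U)}/\|w\|_{L^2(U)}\leq N$ is to furnish, via a standard argument (Caccioppoli plus interior estimates plus the interpolation inequality between $H^{1/2}$ and $H^2$), a bound on the "local frequency" of $w$ near $\partial U$ — equivalently, a lower bound on $\int_{B_{\rho_0/c}(x_0)} w^2 / \int_U w^2$ for a well-chosen interior point $x_0$ — which pins down the constant in the doubling inequality and prevents $w$ from being exponentially concentrated.

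The key steps, in order: (i) fix a point $x_0 \in U_{\rho_0/2}$ (using the Lipschitz character, such a point exists at controlled distance from the boundary) and show $\int_{B_{\rho_0/c}(x_0)}w^2 \geq c'\int_U w^2$ with $c'$ depending on $N$ and the a priori data — this is the quantitative estimate of continuation from the "bulk" to a fixed ball, obtained by combining the doubling inequality with a covering argument and the $H^{1/2}$ bound. (ii) Given an arbitrary $x \in U_{c_1\tau\rho_0}$, connect $x_0$ to $x$ by a chain of overlapping balls $B_{\tau\rho_0/4}(y_1),\dots,B_{\tau\rho_0/4}(y_m)$ all contained in $U_{\tau\rho_0}$, with $m$ bounded in terms of $M_0$, $M_1$, $\tau$ (here one needs $c_1$ large enough, depending on $M_0$, to guarantee that such a chain stays inside $U$ and has controlled length — this is where the Lipschitz constants enter). (iii) Iterate the doubling inequality along the chain: each step costs a factor $K$, so $\int_{B_{\tau\rho_0}(x)}w^2 \geq K^{-m}\int_{B_{\rho_0/c}(x_0)}w^2 \geq K^{-m}c'\int_U w^2 =: c_\tau \int_U w^2$, which is \eqref{eq-9-1}.

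The main obstacle is establishing the quantitative unique continuation for the fourth-order operator with the right dependence of constants, i.e. step (i) together with the underlying doubling inequality. Two points require care: first, the operator is genuinely fourth-order and not a product of second-order operators in general, so the doubling inequality is not off-the-shelf — it rests on the strong unique continuation property that holds under \eqref{eq-7-1}, and one must cite and correctly quote the version from \cite{Mo-Ro-Ve11} (or an analogous Carleman-estimate-based result), checking that its constants depend only on the ellipticity $\xi_0$, the $W^{2,\infty}$ bound $M_2$ on $\C$, and $\overline k$ (the zero-order coefficient $k$ being bounded, it is absorbed as a lower-order perturbation). Second, one must be scrupulous about the scaling: all radii are measured in units of $\rho_0$ and all norms are the dimensionally normalized ones of Section~\ref{Notation}, so the doubling constant genuinely depends on $\tau$ (equivalently $r/\rho_0$) and not on $\rho_0$ alone. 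Once these are in place, the chaining in steps (ii)–(iii) is routine and follows the classical template for "Lipschitz propagation of smallness" (as in \cite{Al} and references therein).
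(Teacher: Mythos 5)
Your proposal follows essentially the same route the paper intends: the authors give no details beyond stating that the proof rests on the three spheres inequality of \cite{Lin-Nag-Wan_2011} (interchangeable, for this purpose, with the doubling inequality you invoke), and the standard way to pass from such a quantitative unique continuation estimate to Lipschitz propagation of smallness is precisely your chain-of-balls argument, with the frequency bound $N$ serving to control the mass of $w$ in the boundary layer $U\setminus U_{t\rho_0}$ so that a definite fraction of $\int_U w^2$ lives where the chaining can reach. Your remark that the quantitative unique continuation for the anisotropic fourth-order operator is not off-the-shelf and hinges on the structural condition \eqref{eq-7-1} is correct and consistent with the paper's intent --- \eqref{eq-7-1} forces the principal symbol to have a repeated quadratic factor, which is exactly why a ``power of the Laplacian'' result such as \cite{Lin-Nag-Wan_2011} becomes applicable --- even though that condition is not explicitly listed among the proposition's hypotheses.
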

The proof of the above proposition is based on the three spheres
inequality obtained in \cite{Lin-Nag-Wan_2011}.
\begin{prop}[$A_p$ property]
\label{prop-9-1}
In the same hypotheses of Proposition \ref{prop-8-1}, there exists
a constant $c_2 > 1$, only depending on $h$, $M_0$,
$M_1$, $M_2$, $\xi_0$, $\overline{k}$, such that, for every $\tau
> 0$ and for every $x \in U_{c_2\tau \rho_0}$, we have
\begin{equation}
    \label{eq-9-2}
    \left (
    \frac{1}{|B_{\tau \rho_0}(x)|}
    \int_{B_{\tau \rho_0}(x)} |w|^2
    \right )
    \left (
    \frac{1}{|B_{\tau \rho_0}(x)|}
    \int_{B_{\tau \rho_0}(x)} |w|^{ - \frac{2}{p-1}  }
    \right )^{  p-1  } \leq B,
\end{equation}
where $B>0$ and $p > 1$ only depend on $h$, $M_0$, $M_1$, $M_2$,
$\xi_0$, $\overline{k}$, $\tau$ and on $N$.
\end{prop}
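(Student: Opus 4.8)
The plan is to deduce \eqref{eq-9-2} from a \emph{doubling inequality} for $w$, and then to run the classical argument of Garofalo and Lin relating doubling inequalities to Muckenhoupt $A_p$ weights. As a first step I would establish that, for every $\tau>0$, there are $c_2>1$ and $K>1$ depending only on $h$, $M_0$, $M_1$, $M_2$, $\xi_0$, $\overline{k}$, $\tau$ and $N$ such that
\[
\int_{B_{2r}(x)}w^{2}\leq K\int_{B_{r}(x)}w^{2},\qquad x\in U_{c_2\tau\rho_0},\ \ 0<r\leq\tau\rho_0.
\]
The three spheres inequality of \cite{Lin-Nag-Wan_2011} for solutions of \eqref{eq-8bis-1} — the same tool behind Proposition \ref{prop-8-1} — together with the a~priori bound $\|w\|_{H^{1/2}(U)}/\|w\|_{L^2(U)}\leq N$, which pins down the doubling index of $w$ at the scale of $U$, propagates to all interior scales and yields such a doubling inequality, once $c_2$ is taken large enough that the relevant balls lie well inside $U$; Proposition \ref{prop-8-1} is itself a companion consequence of this inequality and of the connectedness of $U$.

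For the second, and main, step, fix $x\in U_{c_2\tau\rho_0}$, set $r_0=\tau\rho_0$ and $\Lambda=\|w\|_{L^\infty(B_{2r_0}(x))}$, which is finite by interior regularity for the plate operator and positive since $w\not\equiv0$ (by unique continuation $w$ cannot vanish on an open subset of $U$). Interior regularity gives $\|w\|_{L^\infty(B_\rho(x'))}\leq C\bigl(|B_{2\rho}(x')|^{-1}\int_{B_{2\rho}(x')}w^{2}\bigr)^{1/2}$ at every scale, so the doubling inequality makes $\sup_{B_\rho(x')}|w|^{2}$ and $|B_\rho(x')|^{-1}\int_{B_\rho(x')}w^{2}$ comparable, with constants depending only on $K$ and the a~priori data, at every interior scale $\rho$ and every $x'$; hence $\{\,|w|\leq\epsilon_0\sup_{B_\rho(x')}|w|\,\}$ can cover at most a fixed fraction of $B_\rho(x')$ for a suitable fixed $\epsilon_0\in(0,1)$, and a standard iteration across scales upgrades this, exactly as in Garofalo and Lin, to the power-type sublevel estimate
\[
\bigl|\{\,y\in B_{r_0}(x):\ |w(y)|\leq t\Lambda\,\}\bigr|\leq C\,t^{\mu}\,\bigl|B_{r_0}(x)\bigr|,\qquad 0<t\leq1,
\]
for some $\mu>0$ and $C>0$ depending only on $K$ and the a~priori data; this quantifies the uniform bound on the vanishing rate of $w$ inside $B_{r_0}(x)$ forced by doubling. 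Choosing $p>1$ with $\gamma:=\frac{2}{p-1}=\frac{\mu}{2}$, i.e.\ $p=1+\frac{4}{\mu}$, a layer-cake computation — splitting at $s=\Lambda$, using the sublevel estimate for $s\leq\Lambda$ (legitimate since $\mu>\gamma$) and the trivial bound $\leq1$ for $s>\Lambda$ — gives
\[
\frac{1}{|B_{r_0}(x)|}\int_{B_{r_0}(x)}|w|^{-\gamma}\,dy
=\gamma\int_{0}^{\infty}s^{-\gamma-1}\,\frac{\bigl|\{\,y\in B_{r_0}(x):\ |w(y)|\leq s\,\}\bigr|}{|B_{r_0}(x)|}\,ds
\leq C'\,\Lambda^{-\gamma}.
\]
Since moreover $|B_{r_0}(x)|^{-1}\int_{B_{r_0}(x)}|w|^{2}\leq\Lambda^{2}$, the product of the two factors in \eqref{eq-9-2} is at most $\Lambda^{2}(C'\Lambda^{-\gamma})^{p-1}=(C')^{p-1}$ (recall $\gamma(p-1)=2$), which is \eqref{eq-9-2} with $B=(C')^{p-1}$ and this $p$.

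I expect the technical heart to be the sublevel estimate of the second step: carrying out the scale iteration while keeping the exponent $\mu$ and the constant $C$ dependent only on $K$ and the a~priori data requires the interior regularity theory for the fourth order plate operator with $W^{2,\infty}$ coefficients and some care, adapting the second order reasoning of Garofalo and Lin. The equation enters here (through \cite{Lin-Nag-Wan_2011}) essentially only via the strong unique continuation property — the finiteness of the vanishing order — which is in any case already built into the doubling inequality. By contrast the doubling inequality of the first step belongs to the same circle of quantitative unique continuation estimates already used for Proposition \ref{prop-8-1}, and the concluding layer-cake step is elementary.
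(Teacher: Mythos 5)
Your overall route --- a scale-uniform doubling inequality for $w$ followed by the Garofalo--Lin machinery (comparability of sup and mean on balls via interior regularity, a power-type sublevel-set estimate, and the layer-cake computation of the average of $|w|^{-2/(p-1)}$ with $p$ chosen so that $\tfrac{2}{p-1}$ is below the sublevel exponent) --- is exactly the one the paper intends: Proposition \ref{prop-9-1} is obtained from the doubling inequality of \cite{dC-L-M-R-V-W} by the arguments of \cite{G-L}, and your second and third steps are a correct sketch of that part.

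The gap is in your first step. A three spheres inequality of the type in \cite{Lin-Nag-Wan_2011}, say $\|w\|_{L^2(B_{2r}(x))}\leq C\|w\|_{L^2(B_{r}(x))}^{\theta}\|w\|_{L^2(B_{4r}(x))}^{1-\theta}$ with $\theta\in(0,1)$ independent of $r$, does not ``propagate to all interior scales'' and yield a doubling inequality with a constant $K$ uniform in $r$. Rewriting it as a bound of the doubling ratio at scale $r$ by a power of the doubling ratio at scale $2r$ and iterating downward from the scale of $U$ (where $N$ controls the ratio), one only gets a bound that deteriorates at each step; even in the sharp case $\theta=\tfrac12$ the iteration yields $\int_{B_r}w^2\gtrsim \exp(-c(\log (1/r))^{2})$, a super-polynomial vanishing rate, whereas the $A_p$ property is essentially equivalent to a polynomial bound $\int_{B_r}w^2\gtrsim r^{M}$, i.e.\ to a scale-uniform doubling constant. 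This is precisely why the three spheres inequality suffices for the Lipschitz propagation of smallness (Proposition \ref{prop-8-1}) but not here: the doubling inequality for the anisotropic plate equation \eqref{eq-8bis-1} is a separate and harder result, proved by Carleman estimates in \cite{dC-L-M-R-V-W}, and it is that result that must be invoked rather than derived from three spheres. A secondary point: in the statement $c_2$ depends only on $h$, $M_0$, $M_1$, $M_2$, $\xi_0$, $\overline{k}$, not on $\tau$ or $N$, and this independence is used in the proof of Theorem \ref{theo-7-1}, where $\tau$ is chosen in terms of $c_2$; your construction lets $c_2$ depend on $\tau$ and $N$, which would make that choice circular.
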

The proof of the above proposition follows {}from the doubling inequality
obtained in \cite{dC-L-M-R-V-W}, by applying the arguments in \cite{G-L}.

\begin{proof}[Proof of Theorem \ref{theo-7-1}]
If $\epsilon\geq 1$, then the proof of \eqref{eq-8-2} is trivial in view of
\eqref{eq-3-5}. Therefore we restrict the analysis to the case $0<\epsilon<1$.

The difference
\begin{equation}
    \label{eq-10-1}
    w=w_1 - w_2
\end{equation}
of the solutions to \eqref{eq-7-4}--\eqref{eq-7-6} for $i=1,2$
satisfies the boundary value problem
\begin{center}
\( {\displaystyle \left\{
\begin{array}{lr}
        \divrg (\divrg ( \mathbb P \nabla^2 w)) + k_2 w = (k_2-k_1) w_1, & \mathrm{in}\ \Omega,
    \vspace{0.25em}\\
    w =0, & \mathrm{on}\ \partial \Omega,
        \vspace{0.25em}\\
    \frac{\partial w}{\partial n} =0, & \mathrm{on}\ \partial
    \Omega.
          \vspace{0.25em}\\
\end{array}
\right. } \) \vskip -6.0em
\begin{eqnarray}
& & \label{eq-10-2}\\
& & \label{eq-10-3}\\
& & \label{eq-10-4}
\end{eqnarray}
\end{center}
Obviously, it is not restrictive to assume that
$\sigma \leq \overline{\sigma}$, where
$\overline{\sigma}$ has been defined in \eqref{eq-8c-1} with $
\alpha = \frac{1}{2}$. We have
\begin{equation}
    \label{eq-10-5}
    \int_{\Omega_{\sigma \rho_0}} (k_2 -k_1)^2 w_1^2 \leq 2(I_1
    +I_2),
\end{equation}
where
\begin{equation}
    \label{eq-10-6}
    I_1 = \int_{\Omega_{\sigma \rho_0}} k_2^2 w^2 ,
\end{equation}
\begin{equation}
    \label{eq-10-7}
    I_2 = \int_{\Omega_{\sigma \rho_0}} \left ( \divrg (\divrg ( \mathbb P \nabla^2 w)) \right
    )^2.
\end{equation}
By \eqref{eq-3-5} and \eqref{eq-8-1}, we have
\begin{equation}
    \label{eq-10-8}
    I_1 \leq \frac{\overline{k}^2}{\rho_0^6} \epsilon^2.
\end{equation}
By
\eqref{eq-5a-2}, we have
\begin{equation}
    \label{eq-11-1}
    I_2 \leq C\frac{h^3M_2^2}{12\rho_0^6} \|w\|_{H^4(\Omega_{\sigma \rho_0})}^2.
\end{equation}
with $C>0$ an absolute constant.
Let $g= (k_2-k_1)w_1 -k_2w$. Note that, by \eqref{eq-3-5}, \eqref{eq-4-1}, \eqref{eq-4-3},
and \eqref{eq-7-3},

\begin{equation}
    \label{eq-11-2}
    \|g\|_{H^s(\Omega)} \leq C \frac{\overline{k}f}{\rho_0^4},
\end{equation}
where $C>0$ only depends on $h$, $M_0$, $M_1$, $\xi_0$.
By applying Proposition
\ref{prop-5a-1}, we
have
\begin{equation}
    \label{eq-11-3}
    \|w\|_{H^{4+s}(\Omega_{\sigma \rho_0})} \leq C f
    \overline{k},
\end{equation}
with $C>0$ only depending on $h$, $M_0$, $M_1$, $M_2$, $M_3$,
$\xi_0$, $s$, $\sigma$. {}From the well-known interpolation
inequality
\begin{equation}
    \label{eq-11-4}
    \|w\|_{H^{4}(\Omega_{\sigma \rho_0})} \leq C \|w\|_{H^{4+s}(\Omega_{\sigma
    \rho_0})}^{  \frac{4}{4+s}  }
    \|w\|_{L^{2}(\Omega_{\sigma
    \rho_0})}^{  \frac{s}{4+s}  },
\end{equation}
and recalling \eqref{eq-11-3} and \eqref{eq-8-1}, we obtain
\begin{equation}
    \label{eq-11-5}
    \|w\|_{H^{4}(\Omega_{\sigma \rho_0})} \leq C f
    \epsilon^{  \frac{s}{4+s}  },
\end{equation}
where $C>0$ only depends on $h$, $M_0$, $M_1$, $M_2$, $M_3$,
$\xi_0$, $\overline{k}$, $s$, $\sigma$. {}From
\eqref{eq-10-5}, \eqref{eq-10-8}, \eqref{eq-11-1},
\eqref{eq-11-5}, it follows that
\begin{equation}
    \label{eq-12-1}
    \int_{\Omega_{\sigma \rho_0}} (k_2 -k_1)^2 w_1^2 \leq
    \frac{C}{\rho_0^6}f^2\epsilon^{ \frac{2s}{4+s}  },
\end{equation}
where $C>0$ only depends on $h$, $M_0$, $M_1$, $M_2$, $M_3$,
$\xi_0$, $\overline{k}$, $s$, $\sigma$.

Let us first estimate $|k_2-k_1|$ in a disc centered at $P_0$.
Notice that, by the choice of $\overline{\sigma}$, $\Omega_{\sigma\rho_0}\supset B_{2\overline{\sigma}\rho_0}(P_0)$, for every $\sigma\leq\overline{\sigma}$.
By applying \eqref{eq-8a-0} for $w=w_1$, and by \eqref{eq-12-1} with $\sigma=\overline{\sigma}$, we obtain
\begin{equation}
    \label{eq-12-2}
    \int_{B_{2\overline{\sigma} \rho_0}(P_0)} (k_2 -k_1)^2 \leq
    \frac{C}{\rho_0^6 d^4}\epsilon^{ \frac{2s}{4+s}  },
\end{equation}
where $C>0$ only depends on $h$, $M_0$, $M_1$, $M_2$, $M_3$,
$\xi_0$, $\overline{k}$, $s$ and $d$.

Now, let us control $|k_2-k_1|$ in
\begin{equation}
    \label{eq-13-1}
    \widetilde{\Omega}_{\sigma \rho_0} = \Omega_{\sigma \rho_0}
    \setminus B_{2\overline{\sigma} \rho_0}.
\end{equation}
This estimate is more involved and requires arguments of unique
continuation, precisely the $A_p$-property and the Lipschitz
propagation of smallness.

By applying H\"{o}lder inequality and \eqref{eq-12-1}, we can
write, for every $p > 1$,
\begin{multline}
    \label{eq-13-2}
    \int_{\widetilde{\Omega}_{\sigma \rho_0}}(k_2-k_1)^2 =
    \int_{\widetilde{\Omega}_{\sigma \rho_0}}|w_1|^{ \frac{2}{p}
    }(k_2-k_1)^2|w_1|^{ -\frac{2}{p}  } \leq \\
    \leq
    \left (
    \int_{\widetilde{\Omega}_{\sigma \rho_0}}
    (k_2-k_1)^2 w_1^2
    \right )^{  \frac{1}{p} }
    \left (
    \int_{\widetilde{\Omega}_{\sigma \rho_0}}
    (k_2-k_1)^2 |w_1|^{ - \frac{2}{p-1}  }
    \right )^{  \frac{p-1}{p} } \leq \\
    \leq
    \frac{C}{\rho_0^{  \frac{6}{p}  }} f^{ \frac{2}{p}   }
    \epsilon^{ \frac{2s}{p(4+s)}   }
    \left (
    \int_{\widetilde{\Omega}_{\sigma \rho_0}}
    (k_2-k_1)^2 |w_1|^{ - \frac{2}{p-1}  }
    \right )^{  \frac{p-1}{p} },
\end{multline}
where $C>0$ only depends on $h$, $M_0$, $M_1$, $M_2$, $M_3$,
$\xi_0$, $\overline{k}$, $s$, $\sigma$.

Let us cover $\widetilde{\Omega}_{\sigma \rho_0}$ with internally
non overlapping closed squares $Q_l(x_j)$ with center $x_j$ and side $l
= \frac{\sqrt{2}}{2\max\{2,c_1,c_2\}}\sigma \rho_0$, $j=1,...,J$,
where $c_1$ and $c_2$ have been introduced in Proposition
\ref{prop-8-1} and in Proposition \ref{prop-9-1}, respectively. By
the choice of $l$, denoting $r= \frac{\sqrt{2}}{2}l$,
\begin{equation}
    \label{eq-14-1}
    \widetilde{\Omega}_{\sigma \rho_0} \subset \bigcup_{j=1}^J Q_l(x_j)
    \subset \bigcup_{j=1}^J B_r(x_j) \subset
    \Omega_{\frac{\sigma}{2}
    \rho_0}\setminus B_{\overline{\sigma}\rho_0}(P_0),
\end{equation}
so that
\begin{equation}
    \label{eq-14-2}
    \int_{\widetilde{\Omega}_{\sigma \rho_0}}
    (k_2-k_1)^2 |w_1|^{ - \frac{2}{p-1}  }
    \leq
    \frac{4\overline{k}^2}{\rho_0^8} \int_{\widetilde{\Omega}_{\sigma \rho_0}}
    |w_1|^{ - \frac{2}{p-1}  } \leq \frac{4\overline{k}^2}{\rho_0^8} \sum_{j=1}^J\int_{B_r(x_j)}
    |w_1|^{ - \frac{2}{p-1}  }.
\end{equation}
By applying the $A_p$-property \eqref{eq-9-2} and the Lipschitz
propagation of smallness property \eqref{eq-9-1} to $w=w_1$ in $U
= \Omega \setminus B_{\overline{\sigma}\rho_0}(P_0)$,
with $\tau = \frac{r}{\rho_0}= \frac{\sigma}{2\max\{ 2,c_1,c_2
\}}$, and noticing that, for every $j$, $j=1,...,J$, $dist(x_j,
\partial U) \geq c_i r$, $i=1,2$, we have
\begin{equation}
    \label{eq-14-3}
    \int_{B_r(x_j)}  |w_1|^{ - \frac{2}{p-1}  }
    \leq
    \frac
    { B^{ \frac{1}{p-1}} |B_r(x_j)|    }
    {
    \left (
    \frac{1}{ |B_r(x_j)|  }
    \int_{B_r(x_j)}  |w_1|^2
    \right )^{ \frac{1}{p-1}   }   }
    \leq
    \frac
    { B^{ \frac{1}{p-1} }|B_r(x_j)|    }
    {
    \left (
    \frac{c_\tau}{ |B_r(x_j)|  }
    \int_{\Omega \setminus B_{\overline{\sigma}\rho_0}(P_0)}  |w_1|^2
    \right )^{ \frac{1}{p-1}   }   },
\end{equation}
where $B>0$, $p>1$ and $c_\tau>0$ only depend on $h$, $M_0$, $M_1$,
$M_2$, $\xi_0$, $\overline{k}$, $\sigma$ and on the frequency
ratio  $\mathcal{F}= \frac{ \|w_1\|_{ H^{ \frac{1}{2}   }(  \Omega
\setminus B_{\overline{\sigma}\rho_0}(P_0) )   } }
 {  \|w_1\|_{ L^{2}(  \Omega
\setminus B_{\overline{\sigma}\rho_0}(P_0) )   }  }$.
Such a bound can be achieved as follows.
Notice that, since $\Omega \setminus
B_{\overline{\sigma}\rho_0}(P_0)  \supset
B_{2\overline{\sigma}\rho_0}(P_0)  \setminus
B_{\overline{\sigma}\rho_0}(P_0) $, by applying \eqref{eq-8a-1},
we have
\begin{equation}
    \label{eq-15-1}
    \mathcal{F} \leq \frac{C}{\overline{\sigma} d},
\end{equation}
where $C>0$ only depends on $h$, $M_0$, $M_1$, $\xi_0$, $\xi_1$, $\overline{k}$.
By applying \eqref{eq-8a-1} and \eqref{eq-8b-3} to estimate {}from
below the denominator in the right hand side of \eqref{eq-14-3}, by
\eqref{eq-2-1} and \eqref{eq-14-2}, we obtain
\begin{equation}
    \label{eq-15-2}
    \int_{\widetilde{\Omega}_{\sigma \rho_0}}
    (k_2-k_1)^2 |w_1|^{ - \frac{2}{p-1}  }
    \leq
    \frac{C|\Omega|}{\rho_0^8(d^4f^2)^{\frac{1}{p-1}}} \leq \frac{C}{\rho_0^6(d^4f^2)^{\frac{1}{p-1}}},
\end{equation}
where $C>0$ only depends on  $h$, $M_0$, $M_1$, $M_2$, $\xi_0$, $\overline{k}$, $d$ and $\sigma$. By \eqref{eq-13-2} and \eqref{eq-15-2} we have
\begin{equation}
    \label{eq-16-1}
    \int_{\widetilde{\Omega}_{\sigma \rho_0}}
    (k_2-k_1)^2
    \leq
    \frac{C}{\rho_0^6 d^{\frac{4}{p}}} \epsilon^{ \frac{2s}{p(4+s)}  },
\end{equation}
where $C>0$ only depends on  $h$, $M_0$, $M_1$, $M_2$, $M_3$,
$\xi_0$, $\overline{k}$, $s$, $d$ and $\sigma$. Finally, by
\eqref{eq-12-2} and \eqref{eq-16-1}, the thesis follows.
\end{proof}

\bibliographystyle{plain}

\end{document}